\documentclass[11pt]{article}
\usepackage[T1]{fontenc}
\usepackage[margin=1in]{geometry}

\usepackage{latexsym}
\usepackage[small,bf]{caption2}
\usepackage{graphics}
\usepackage{epsfig}
\usepackage{amsopn}
\usepackage{url}
\usepackage{xcolor}
\usepackage{hyperref}
\usepackage{algorithm}
\usepackage{algpseudocode}
\usepackage{graphics}
\usepackage{comment}

\usepackage[shortlabels]{enumitem}

\usepackage{multirow}

\usepackage{titling}

\usepackage{amsmath}
\usepackage{amssymb}
\usepackage{amsthm}
\usepackage{hyperref}   
\usepackage[nameinlink,noabbrev]{cleveref}
\usepackage{aliascnt}

\usepackage{mathtools}
\usepackage{enumitem}
\usepackage{bbm}
\usepackage{turnstile}

\newtheorem{theorem}{Theorem}[section]

\newaliascnt{lemma}{theorem}
\newtheorem{lemma}[lemma]{Lemma}
\aliascntresetthe{lemma}

\newaliascnt{proposition}{theorem}

\aliascntresetthe{proposition}

\newaliascnt{fact}{theorem}

\aliascntresetthe{fact}

\newaliascnt{claim}{theorem}

\aliascntresetthe{claim}

\newaliascnt{definition}{theorem}

\aliascntresetthe{definition}

\newaliascnt{example}{theorem}

\aliascntresetthe{example}

\newaliascnt{corollary}{theorem}

\aliascntresetthe{corollary}

\newaliascnt{assumption}{theorem}

\aliascntresetthe{assumption}

\newaliascnt{observation}{theorem}

\aliascntresetthe{observation}

\newaliascnt{conjecture}{theorem}
\newtheorem{conjecture}[conjecture]{Conjecture}
\aliascntresetthe{conjecture}

\newaliascnt{question}{theorem}
\newtheorem{question}[question]{Question}
\aliascntresetthe{question}

\newaliascnt{problem}{theorem}

\aliascntresetthe{problem}

\crefname{theorem}{theorem}{theorems}
\Crefname{theorem}{Theorem}{Theorems}
\crefname{lemma}{lemma}{lemmas}
\Crefname{lemma}{Lemma}{Lemmas}
\crefname{proposition}{proposition}{propositions}
\Crefname{proposition}{Proposition}{Propositions}
\crefname{fact}{fact}{facts}
\Crefname{fact}{Fact}{Facts}
\crefname{claim}{claim}{claims}
\Crefname{claim}{Claim}{Claims}
\crefname{definition}{definition}{definition}
\Crefname{definition}{Definition}{definitions}
\crefname{example}{example}{examples}
\Crefname{example}{Example}{Examples}
\crefname{corollary}{corollary}{corollaries}
\Crefname{corollary}{Corollary}{Corollaries}
\crefname{assumption}{assumption}{assumptions}
\Crefname{assumption}{Assumption}{Assumptions}
\crefname{observation}{observation}{observations}
\Crefname{observation}{Observation}{Observations}
\crefname{conjecture}{conjecture}{conjectures}
\Crefname{conjecture}{Conjecture}{Conjectures}
\crefname{question}{question}{questions}
\Crefname{question}{Question}{Questions}
\crefname{problem}{problem}{problems}
\Crefname{problem}{Problem}{Problems}

\renewcommand{\epsilon}{\varepsilon}

\newcommand{\rr}{\mathbb{R}}

\newcommand{\Pp}{\mathcal{P}}

\newcommand{\calF}{\mathcal{F}}

\newcommand{\calP}{\mathcal{P}}

\newcommand{\F}{\mathcal{F}}
\newcommand{\G}{\mathcal{G}}
\newcommand{\Hh}{\mathcal{H}}
\newcommand{\U}{\mathcal{U}}

\newcommand{\ignore}[1]{{}}

\usepackage{booktabs} 
\usepackage{tabularx}

\numberwithin{equation}{section}

\begin{document}

\title{The weighted union-set conjecture is false}
\author{Yimeng Wang \\ UCLA}

\maketitle

\date{\today}

\begin{abstract}
We disprove a weighted generalization of the union-set conjecture \cite{frankl1995extremal} raised in \cite{Gowers1}. We construct nonnegative weights on subsets of a finite ground set satisfying
$w(\varnothing)=0$ and $w(A\cup B)\ge\min\{w(A),w(B)\}$, while every
coordinate belongs to an arbitrarily small fraction of the total weight. 
\end{abstract}

\section{Introduction}\label{sec:intro}
Let $V$ be a finite universe. A family $\F\subseteq2^V$ is \emph{union-closed} if $A\cup B\in\F$ whenever
$A,B\in\F$. Frankl's union-closed sets conjecture asks whether every finite
union-closed family containing a nonempty set has an element in at least half
its members; see, for example, the discussion in~\cite{Gowers1,EIL}. There has been tremendous progress towards the conjecture in recent years. Most notably, using an entropy-based approach, Gilmer \cite{Gilmer2022} showed that there exists an element present in at least $c \geq 0.01$ fraction of the sets. Building upon this, Sawin \cite{Sawin2023} improved the constant to $c \geq 0.38$.

\begin{conjecture}[Frankl's union-set conjecture \cite{frankl1995extremal}]\label{conj:frankl}
    There exists a constant $c_{\mathrm{us}} \geq 1/2$ such that the following holds: fix a finite universe $V$. Let $\calF \subseteq 2^{V}$ be a union-closed set family containing at least one non-empty set. Then there exists an element $i \in V$ such that 
    \[
    |\{S \in \calF: i \in S\}| \geq c_{\mathrm{us}} |\calF|.
    \]
\end{conjecture}

In this paper, we consider a natural weighted analogue of this conjecture as discussed in \cite{Gowers1}. Specifically, we replace an indicator function by nonnegative
set weights and require a union to retain at least the smaller input weight.
The question is whether this form of closure still forces an element
to carry a constant fraction of the total weight.

To state the problem precisely, call a nonzero function
$w:2^V\to\rr_{\ge0}$ \emph{admissible} if
\begin{equation}\label{eq:admissible}
 w(\varnothing)=0,\qquad
 w(A\cup B)\ge\min\{w(A),w(B)\}
\end{equation}
for all $A, B \subseteq V$.
Write
\[
 p_x(w)=\frac{\sum_{A\ni x}w(A)}{\sum_{A \subseteq V}w(A)},\qquad
 \delta(w)=\max_{x\in V}p_x(w).
\]
For indicator weights, admissibility says exactly that the positive support
is a nonempty union-closed family not containing $\varnothing$. For general
weights, the minimum condition says that every positive superlevel family is
union-closed. The condition $w(\varnothing)=0$ rules out concentrating weight
on a set containing no coordinates.

We can ask the following question, a weighted analogue of the union-set conjecture. 

\begin{question}[Weighted union-set conjecture]
    Let $w: 2^V \to \rr_{\geq 0}$ be an admissible function. Then is it always true that $\delta (w) \geq c_w$ for some absolute constant $c_w > 0$?
\end{question}

 It is not difficult to see that any lower bound on $c_{w}$ implies a lower bound on $c_{\mathrm{us}}$. Indeed, we can simply take $w$ to be the indicator functions of sets in a union-closed family\footnote{The indicator reduction directly applies to families omitting \(\varnothing\). The same abundance bound follows for families containing \(\varnothing\) by taking disjoint-ground-set Cartesian powers, deleting the unique empty member, and letting the number of factors tend to infinity.}. Hence, proving the statement with $c_w \geq 1/2$ immediately implies \Cref{conj:frankl}.  In this paper, we prove that this stronger version of union-set conjecture is false. Namely, for any $\epsilon > 0$, there exists an admissible function over a finite ground set such that $\delta (w) < \epsilon$.

 \begin{theorem}[Main theorem]\label{thm:main}
     For every $\varepsilon>0$, there exists an admissible function over a finite ground set such that $\delta (w) < \epsilon$.
 \end{theorem}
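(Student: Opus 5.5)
The plan is to build $w$ from a stack of nested union-closed families and let the averaging over thresholds do the work. By the layer-cake identity, any admissible $w$ can be written as $w=\sum_{k=1}^K \lambda_k \mathbbm{1}_{\F_k}$ with $\lambda_k>0$ and $\F_1\supseteq\F_2\supseteq\cdots\supseteq\F_K$. Each $\F_k$ is a superlevel family, so it is union-closed and avoids $\varnothing$. Then
\[
p_x(w)=\frac{\sum_k \lambda_k\,|\{A\in\F_k:x\in A\}|}{\sum_k\lambda_k|\F_k|}.
\]
By the results of Gilmer and Sawin, each single layer $\F_k$ has an element of frequency at least $0.38$. So $\delta(w)$ can only be small if the frequent element changes from layer to layer, and each layer's frequent elements are rare in the layers that carry the rest of the mass. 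I therefore aim for distinct elements (or disjoint blocks) $x_1,\dots,x_K$, with $x_k$ carrying a constant fraction of layer $k$ and a negligible fraction of every other layer. I would also choose $\lambda_k$ so that the $K$ layer masses $M_k:=\lambda_k|\F_k|$ are comparable. This would give $\delta(w)\lesssim C/K$, and we take $K\gg1/\varepsilon$.

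The first concrete step is a \emph{lexicographic gluing lemma}. Split $V=V_1\sqcup\cdots\sqcup V_K$. For $A\neq\varnothing$ let $m(A)$ be the least index $i$ with $A\cap V_i\neq\varnothing$, and set $w(A)=\lambda_{m(A)}\,u_{m(A)}(A\cap V_{m(A)})$ with each $u_i$ admissible on $V_i$ and with $\lambda$ increasing fast enough. Admissibility follows by cases.
\begin{itemize}
\item If $m(A)\neq m(B)$, the union has the smaller index and inherits exactly that set's weight.
\item If $m(A)=m(B)$, the inequality reduces to admissibility of $u_{m(A)}$.
\end{itemize}
This gives a flexible recursive way to build admissible weights, since the inner $u_i$ can themselves be glued constructions.

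The second step is the frequency count, and this is where I expect the main obstacle. In the gluing above, the part of a class-$i$ set lying in the later blocks $V_{i+1}\cup\cdots\cup V_K$ is completely free. So every element of a later block appears in half of each earlier class, which gives $p_x\approx\tfrac12\sum_{i\le j}M_i/\sum_i M_i$, and this is near $1/2$ for $x\in V_K$. One cannot simply damp large tails by a factor $\psi(\text{tail})$. The cross-class case of the admissibility check then forces $\psi$ to be nondecreasing, which again favors elements of late blocks.

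The remedy I would try first is to give up disjoint-block rigidity and instead let the layers overlap on shared ground elements. Concretely, make the free tail range over a structured union-closed family in which each later element has small frequency, rather than over all of $2^{V_{>i}}$. The natural candidate is to apply the gluing recursively: the tail of a class-$i$ set ranges over the support of an earlier-constructed weight with small $\delta$, and the weights $\lambda_i$ grow geometrically. The target is a recursion of the form $\delta_{\text{new}}\le \max\{\delta_{\text{tail}},\,c/K\}+o(1)$, iterated to push $\delta$ below $\varepsilon$.

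Verifying that such a recursion genuinely closes is the crux. In particular, one must check that union-closure of every superlevel set survives when tails are restricted to supports rather than full power sets. This is the step I would expect to require the real idea of the construction.
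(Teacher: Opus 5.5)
\textbf{There is a genuine gap: the proposal has the right reduction but no construction.} Your reduction is correct and is the paper's own. Write $w$ as a positive combination of indicators of nested empty-free union-closed families with equal layer masses, $w=\frac1m\sum_j\mathbf{1}_{\F_j}/|\F_j|$. Then arrange that each coordinate is absent from the layers below its ``generation'' and has frequency at most $1/r$ in the layers above. This gives $\delta(w)\le\frac1m+\frac{m-1}{mr}$.

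But the proposal stops where the proof begins. No such chain is constructed, and the remedy you sketch does not close:
\begin{itemize}
\item In your gluing, the weight of a class-$i$ set depends only on its head. So its tail is counted \emph{uniformly} over the tail family $\mathcal T_i$.
\item Admissibility requires $w(A\cup B)>0$ whenever $w(A),w(B)>0$. This forces $\mathcal T_i$ to be union-closed and closed under unions with the later classes.
\item Hence the frequency of an old coordinate inside class $i$ is its uniform frequency in $\mathcal T_i$. The quantity $\delta$ of an earlier weight $w'$ never enters.
\item If $\mathcal T_i$ is just $\mathrm{supp}(w')$, possibly with $\varnothing$ added, the Gilmer--Sawin bound you quoted gives an old coordinate lying in a constant fraction of class $i$.
\end{itemize}
In the fully recursive version, $\mathcal T_i=\mathrm{supp}(w_{>i})\cup\{\varnothing\}$ is the product of the families $\mathrm{supp}(u_j)\cup\{\varnothing\}$ for $j>i$. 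So a single coordinate of the last block has frequency bounded below by an absolute constant in every class $i<K$. Then $\delta(w)$ stays bounded below unless class $K$ carries almost all the mass, and in that case $\delta(w)\approx\delta(u_K)$. Your target recursion is therefore circular.

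\textbf{The missing idea is a rare-coordinate extension.} Given any nonempty empty-free union-closed $\F$ on $n$ points and $r\ge2$, build a union-closed $\G$ that contains a copy of $\F$ and in which \emph{every inherited coordinate} has frequency at most $1/r$. The unavoidable frequent coordinates of $\G$ must be fresh ones. In your language, the tail family must be such an enlargement of the old support, not the support itself.

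The paper's construction:
\begin{itemize}
\item Clone each old coordinate into $t$ copies in each of $r$ blocks $Y_1,\dots,Y_r$ via $E(A)=A\times[r]\times[t]$. This map is injective and preserves unions and frequencies.
\item Add auxiliary blocks $X_j$ of size $(n+1)t$ and principal classes $\Pp_j=\{X_j\cup B\cup C:B\subseteq X\setminus X_j,\ C\subseteq Y_j\}$, each of size $L$.
\item Let $\G$ be the union-closure of $E(\F)\cup\Pp_1\cup\dots\cup\Pp_r$.
\end{itemize}
The counting then goes as follows:
\begin{itemize}
\item Each copy lies in exactly $L/2$ principal sets.
\item A union with a nonempty old set fixes at least $t$ positions of $Y_j$. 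This leaves at most $L2^{-t}$ outputs per old set and class.
\item In a mixed union, each extra class frees $nt$ old positions but fills $(n+1)t$ auxiliary ones. This leaves at most $L2^{-(k-1)t}$ outputs for $k$ classes.
\item With $t=n+r+3$, the non-principal sets number at most $L/4$, so $q_\G(y)\le(L/2+L/4)/(3rL/4)=1/r$.
\end{itemize}
Iterating this gives the required chain, provided the same clone map is applied to \emph{every} earlier family. That keeps old frequencies unchanged and keeps the new coordinates absent from earlier layers. Without a construction of this kind, your argument does not yet produce a counterexample.
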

We deduce \Cref{thm:main} from the following quantitative construction.

\begin{theorem}[Quantitative counterexample]\label{thm:example}
For every pair of integers $r\ge2$ and $m\ge1$, there exists a finite ground set
$V$ and an admissible function $w:2^V\to\rr_{\ge0}$ such that
\begin{equation}\label{eq:main} \delta(w)\le\frac1m+\frac{m-1}{mr}.
\end{equation}
\end{theorem}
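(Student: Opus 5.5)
The plan is to build $w$ in layers indexed by a total order, so that admissibility is automatic, and then to tune the layer weights so that every coordinate is heavy in at most one layer. The basic observation is this. Suppose $V=V_1\sqcup\cdots\sqcup V_m$, and for $A\neq\varnothing$ let $j(A)$ be the largest index with $A\cap V_j\neq\varnothing$. Then $j(A\cup B)=\max\{j(A),j(B)\}$. Hence any weight of the form $w(A)=a_{j(A)}\,u_{j(A)}(A)$ satisfies $w(A\cup B)\ge\min\{w(A),w(B)\}$, provided two conditions hold. First, inside each layer $\{A: j(A)=j\}$ the factor $u_j$ is itself ``min-closed''. Second, when a union lands in the layer of $A$, its value is not below $u_j(A)$. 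Since a union always lands in the layer of one of its arguments, these two conditions suffice. I will choose the $a_j$ so that the total weight of layer $j$ is exactly $1/m$ of the total.

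Within layer $j$, a coordinate $x\in V_j$ may appear in up to the full layer weight; this is the source of the term $1/m$. The work is to make every coordinate $x\in V_i$ with $i<j$ appear in at most a $1/r$ fraction of the weight of layer $j$. That would give $p_x(w)\le \frac1m+\frac{m-1}{m}\cdot\frac1r$, which is exactly \eqref{eq:main}.

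I would arrange this recursively, by induction on $m$. For $m=1$ the bound is $1$ and is trivial. Given an admissible $w'$ on $V'=V_1\sqcup\cdots\sqcup V_{m-1}$ with the desired bound, I adjoin a new block $V_m$ and define the new top layer on sets $A$ with $A\cap V_m\neq\varnothing$. The lower trace $A\cap V'$ must be allowed to be arbitrary, since unions with lower sets occur. However, its weight should depend on $A\cap V'$ only through a coarse statistic that decreases under enlargement at a rate controlled by $r$. A concrete first attempt is to take $V_m$ to be $r$ copies, indexed by $c\in[r]$, of a suitable gadget. I would then let the top-layer weight factor as a product of a part depending on $A\cap V_m$ and a part that forces $A\cap V'$ to ``belong'' to one copy $c$. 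The idea is that a fixed lower coordinate is charged only in the copy it is paired with, so it receives at most a $1/r$ share.

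The main obstacle is reconciling two requirements in the top layer. Unions of a top-layer set with arbitrary lower sets must keep weight at least the minimum. At the same time, lower coordinates must carry only a $1/r$ fraction of the top-layer weight, whereas naively they would appear in about half of all sets. The first thing I would try is to make the top-layer weight a steeply decreasing function of a lexicographic statistic. This statistic is a pair: first, how many of the $r$ copies $A\cap V_m$ meets; second, the $w'$-level of $A\cap V'$. The steep decrease should ensure that whenever a union enlarges the lower trace, it simultaneously enlarges the first component, so the min condition is inherited. Once such a top layer is found, verifying admissibility reduces to a case check on which layer each of $A$, $B$, and $A\cup B$ lies in. Choosing the $a_j$ so that each layer carries equal total weight is then routine. \Cref{thm:main} follows by taking $m>2/\varepsilon$ and $r>2/\varepsilon$.
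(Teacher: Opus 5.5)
Your layered reduction is sound and fits the paper: the paper's weight $\frac1m\sum_j\mathbf 1_{\F_j}/|\F_j|$ is constant on each generation layer $\F_k\setminus\F_{k-1}$, which is your layer $j(A)=k$. The gap is that the proposal stops where the content of the theorem begins. The top layer is never constructed, and the heuristic you offer does not touch the unions that matter.

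Your second condition requires $u_m(S\cup B)\ge u_m(S)$ for top-layer $S$ and supported lower sets $B\subseteq V'$. Such a union leaves $S\cap V_m$ unchanged, and hence also the first component of your lexicographic statistic. So the ``steep decrease'' gives nothing for these unions; the weight simply may not drop when a supported lower set is added. Rarity of lower coordinates must therefore come from counting. The map $S\mapsto S\cup B$ sends top-layer sets, without lowering weight, into top-layer sets containing $B$. So the coordinates of $B$ are rare only if this map is very far from injective, i.e.\ only if every supported lower set fills many of the freely varying lower positions of most top-layer sets.

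With $(V',w')$ kept fixed this fails. Take the base $V_1=\{v\}$, $w'=\mathbf 1_{\{\{v\}\}}$, and equal layer weights at $m=2$. Every top-layer set has $w(S)\le\frac12=w(\{v\})$, so admissibility itself forces $w(S\cup\{v\})\ge w(S)$. Since $S\mapsto S\cup\{v\}$ is injective on top-layer sets avoiding $v$, the coordinate $v$ carries at least half of the top layer. Hence $p_v(w)\ge\frac34>\frac12+\frac1{2r}$ once $r\ge3$. Your pairing idea has the same defect. If a supported lower set $B$ lies in the share of $V'$ paired with copy $c'$, then $S\mapsto S\cup B$ is injective on the sets of all other copies. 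So the coordinates of $B$ carry at least as much top-layer weight as all copies $c\ne c'$ combined.

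The missing idea is to re-embed the old structure before each extension, so that every nonempty old set meets the free part of every new class in many positions. The paper does this by cloning. The map $E(A)=A\times[r]\times[t]$ puts $t$ copies of \emph{every} old coordinate into \emph{each} block $Y_1,\dots,Y_r$. It is applied to all earlier families at once, which preserves nesting, union-closure and every earlier frequency exactly.

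The principal classes $\Pp_j=\{X_j\cup B\cup C: B\subseteq X\setminus X_j,\ C\subseteq Y_j\}$ then do the counting:
\begin{itemize}
\item each $y\in Y_j$ lies in exactly $L/2$ principal sets, out of at least $rL(1-2^{-nt})$ distinct ones;
\item a union with a fixed old set has at most $L2^{-t}$ outputs per class;
\item unions of $k\ge2$ classes have $L2^{-(k-1)t}$ outputs, because $|X_j|=(n+1)t$ exceeds the $nt$ freed positions by $t$;
\item $t=n+r+3$ makes the non-principal sets at most $L/4$ in number, giving $q_\G(y)\le 1/r$.
\end{itemize}
Correspondingly, your induction hypothesis must carry the whole chain with its per-stage frequency data, transportable through the cloning map. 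An admissible $w'$ with a bound on $\delta(w')$ is not enough.
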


\begin{proof}(of \Cref{thm:main} assuming \Cref{thm:example})
    Take $m=r\ge\max\{2,\lceil2/\varepsilon\rceil\}$ in
\Cref{thm:example}. Then
\[
 \delta(w)\le\frac{2r-1}{r^2}<\frac2r\le\varepsilon.
\]
Multiplying rational weights by a common positive integer denominator
preserves both conditions in \eqref{eq:admissible} and all the normalized
frequencies $p_x(w)$.
\end{proof}

\paragraph{Context and related constructions.}
The minimum-weight formulation was discussed in Polymath11~\cite{Gowers1};
the normalization $w(\varnothing)=0$ was explicitly proposed in the subsequent
FUNC3 discussion~\cite{Gowers3}. Element duplication, which we call
\emph{cloning}, is also described in~\cite{Gowers1}. Kahn's course problem and
solution~\cite{KahnProblem,KahnSolution} give a block construction disproving
the weighted $1/2$ statement. The example also has zero weight at the empty
set. Its counting principle is that most members of the union-closed family
belong to the original generating classes, whereas unions involving several
classes contribute relatively few additional sets.

Ellis, Ivan, and Leader~\cite{EIL} construct union-closed families in which
every element of a unique smallest member has arbitrarily small frequency.
The extension needed here has a different input requirement: it must preserve
a cloned copy of \emph{any prescribed old family}, not just exhibit a family
with one specially chosen rare set. This permits repeated extensions while
retaining all previous stages. Our proof uses Kahn's block-counting idea in
this extension-and-iteration scheme. 

\paragraph{Recent development on the union-set conjecture.}
Gilmer's 2022 breakthrough established the first positive absolute lower bound toward Frankl's union-closed sets conjecture: every finite union-closed family containing a nonempty set has an element in at least a $0.01$-fraction of its members, compared with the conjectured $1/2$. His proof uses entropy growth under the union of independent samples, introducing an information-theoretic approach to the problem~\cite{Gilmer2022}. Subsequent work of Alweiss, Huang, and Sellke and of Chase and Lovett improved the constant to $\alpha=(3-\sqrt{5})/2\approx0.381966$~\cite{AHS2022,CL2022}. Dependent-coupling arguments initiated by Sawin and further developed by Liu yielded improvements beyond this threshold~\cite{Sawin2023,Liu2023}. A September 2026 preprint of Jiang reports a computer-assisted improvement to $0.38288525$~\cite{Jiang2026}.

Recent work has also clarified the extent and limitations of these abundance phenomena. Das and Wu proved that, for every fixed $k$, the $k$th-most frequent element has frequency at least $\alpha-o(1)$ as the size of the union-closed family tends to infinity~\cite{DasWu2024}. Conversely, abundance need not occur within a distinguished small member: Ellis, Ivan, and Leader constructed families with a unique smallest set $S$ in which every element of $S$ has frequency at most $(1+o(1))\log_2|S|/(2|S|)$~\cite{EIL}. Ellis also showed that average overlap density can tend to zero, disproving another proposed constant-bound strengthening~\cite{Ellis2020}. These contrasting results motivate asking which abundance conclusions survive beyond the uniform distribution on a single union-closed family. Our result identifies a further limitation: even when $w(\varnothing)=0$ and every positive superlevel family of $w$ is union-closed, the largest normalized weighted coordinate marginal can be arbitrarily small.

\subsection{Proof overview}

Our starting point is a sequence of non-empty union-closed families $\calF_1 \subseteq \cdots \subseteq \calF_m \subseteq 2^V \backslash \{\emptyset\}$. Define the function $w: 2^V \to \rr_{\geq 0}$ as 
\[
w(A) = \frac{1}{m} \sum_{j=1}^m \frac{\mathbf{1}_{\calF_j}(A)}{|\calF_j|}.
\]
It is not hard to see that (1) $w$ is admissible (2) $\sum_{A \subseteq V} w(A) = 1$ and (3) $p_x(w) = (1/m) \sum_{j=1}^m q_{\calF_j}(x)$ where $q_{\calF_j}(x):= |\{A \in \calF_j: x \in A\}| / |\calF_j|$, i.e., the fraction of sets in $\calF_j$ that contain the element $x$. Moreover, if for each $x \in V$, there exists a label $b(x) \in [m]$ such that 
\begin{equation}\label{eqn:set_family_conditions}
     q_{\F_j}(x)=0\quad(j<b(x)),\qquad
 q_{\F_j}(x)\le1/r\quad(j>b(x)),
\end{equation}
then we would have
\[
p_x(w) \leq \frac{1}{m}\Big(1 + \frac{m-b(x)}{r} \Big) \leq \frac{1}{m} + \frac{m-1}{mr},
\]
proving \Cref{thm:example}, and hence \Cref{thm:main}. 

Hence, it suffices to construct a sequence of families $\calF_1 \subseteq \cdots \subseteq \calF_m$ satisfying (\ref{eqn:set_family_conditions}). Next we describe how this can be achieved.

\paragraph{Cloning step.} The main ingredient is an extension operation. Given any nonempty union-closed family \(\mathcal F\) of nonempty sets and an integer \(r\ge2\), we construct a union-closed family \(\mathcal G\) containing a cloned copy of \(\mathcal F\), such that every inherited coordinate has frequency at most \(1/r\) in \(\mathcal G\). Newly introduced coordinates are allowed to be frequent.

To illustrate this cloning operation, let's consider the following simple example: start with the union-closed family $\calF=\bigl\{\{a\},\{b\},\{a,b\}\bigr\}.$ Replace \(a\) by two copies \(a_1,a_2\), and \(b\) by two copies \(b_1,b_2\). On the new ground set $Y = \{a_1,a_2,b_1,b_2\}$, the cloned family is
\[
E(\mathcal F) = \bigl\{ \{a_1,a_2\}, \{b_1,b_2\}, \{a_1,a_2,b_1,b_2\} \bigr\}.
\]

There are still three sets, and every copy still has frequency \(2/3\). Thus cloning alone has not decreased any coordinate frequency. The benefit appears when new sets may choose the copies independently. Let \(C\subseteq Y\) be a freely chosen portion of a new set, with any other coordinates held fixed. There are \(2^4=16\) possible choices of \(C\). However, after taking the union with the cloned old set $E(\{a\})=\{a_1,a_2\}$,
the possible outputs are precisely

$$ \bigl\{ \{a_1,a_2\}\cup D: D\subseteq\{b_1,b_2\} \bigr\}, $$

 We only have \(2^2=4\) sets. Indeed, once we take the union with \(E(\{a\})\), the presence or absence of \(a_1\) and \(a_2\) in \(C\) no longer affects the output. Each output therefore comes from four different inputs. Sixteen choices of the new set’s freely varying portion produce only four distinct unions with the old set.

With \(t\) copies of each original coordinate, the same calculation sends \(2^{2t}\) choices of \(C\) to just \(2^t\) distinct unions:

$$ \frac{\text{number of distinct union outputs}} {\text{number of freely chosen inputs}} = \frac{2^t}{2^{2t}} = 2^{-t}. $$

Thus cloning preserves the old instance while making many new inputs collapse to the same union output. In the full construction, this allows us to add large principal classes while keeping the additional sets forced by unions with the old family comparatively few.
In addition, we will show that this operation also preserves unions, the number of old sets, and the frequency inherited by each copy. 

More concretely, write \(n=|V|\) for the old ground-set size. Form \(r\) disjoint blocks \(Y_1,\ldots,Y_r\), each containing \(t\) copies of every old coordinate. The cloned set \(E(A)\) contains all copies of the coordinates in \(A\), and hence occupies at least \(t\) positions in every \(Y_j\) whenever \(A\ne\varnothing\). Introduce auxiliary blocks \(X_1,\ldots,X_r\), each of size \(s=nt+t\), and write \(X=\bigsqcup_jX_j\). For each \(j\), add the principal class

$$ \mathcal P_j= \left\{ X_j\cup B\cup C: B\subseteq X\setminus X_j,\ C\subseteq Y_j \right\}. $$

Each class has \(L=2^{(r-1)s+nt}\) members. An inherited coordinate in \(Y_j\) occurs in half of \(\mathcal P_j\) and in none of the other principal classes. The principal classes are nearly disjoint, suggesting an inherited-coordinate frequency close to \(1/(2r)\).

\paragraph{Principal classes counting and iterating.} Let \(\mathcal G\) be the union-closure of the principal classes together with \(E(\mathcal F)\). Two complementary mechanisms keep the additional sets few. First, unioning a member of \(\mathcal P_j\) with a fixed nonempty old set forces at least \(t\) previously free positions in \(Y_j\), leaving at most \(L2^{-t}\) distinct outputs. Second, combining \(k\) different principal classes forces \(k\) auxiliary blocks to be full. Each additional class gains \(nt\) free old-coordinate positions but loses \(s=nt+t\) free auxiliary positions, leaving only \(L2^{-(k-1)t}\) possible outputs for a fixed collection of classes. Adding a fixed old set cannot increase this count.

Every generated set is a union of at most one old set and at most one member of each principal class, since the old family and each principal class are themselves union-closed. These bounds therefore cover the entire union-closure. Choosing \(t\) sufficiently large makes the number of non-principal sets at most \(L/4\), while the principal classes supply at least \(3rL/4\) distinct sets. Even allowing every non-principal set to contain a given inherited coordinate \(y\), we obtain

$$ \frac{|\{S\in\mathcal G:y\in S\}|}{|\mathcal G|} \le \frac{L/2+L/4}{3rL/4} = \frac1r. $$

Finally, starting from a one-set family on one coordinate, we repeatedly apply the extension operation. At every step, we apply the same cloning map to all earlier families, not just to the largest one. Their nesting and coordinate frequencies are therefore preserved. Copies retain their ancestor’s generation, while the new auxiliary coordinates form the next generation.

\paragraph{Organization.} \Cref{sec:reduction_to_union_closed_families} reduces the desired construction of weights to the construction of a sequence of union-closed set families. \Cref{sec:cloning} establishes the cloning properties.
\Cref{sec:principal} counts unions of principal classes, and
\Cref{sec:counting} proves the rare-coordinate extension through
separate counting lemmas. Lastly, \Cref{sec:iterating} then checks that the
entire earlier chain survives each extension. 

\paragraph{Notation.}
All ground sets are finite. A family is \emph{empty-free} if it does not
contain $\varnothing$; this does not assert that the family itself is nonempty.
For a nonempty family $\Hh\subseteq2^V$ and $x\in V$, let
\[
 \Hh_x=\{A\in\Hh:x\in A\},\qquad
 q_{\Hh}(x)=\frac{|\Hh_x|}{|\Hh|}.
\]
We use $[k]=\{1,\ldots,k\}$, and $\mathbf{1}_{\Hh}$ denotes the indicator of a
family. A family generated by other families consists of their
\emph{nonempty finite unions}; we never adjoin the empty union by convention.

\section{Reduction to union-closed set families}\label{sec:reduction_to_union_closed_families}
In this section, we define the weight function $w$ over nested union-closed families. Throughout this section, fix nonempty union-closed families
\[
 \F_1\subseteq\cdots\subseteq\F_m\subseteq2^V\setminus\{\varnothing\},
\]
and define
\begin{equation}\label{eq:mixture}
 w(A)=\frac1m\sum_{j=1}^m\frac{\mathbf{1}_{\F_j}(A)}{|\F_j|}.
\end{equation}

\begin{lemma}[Weights on nested families]\label{lem:reduce_to_nested}
    Let $m \geq 1$ and let $\calF_1 \subseteq ... \subseteq \calF_m \subseteq 2^V \backslash \{\emptyset\}$ be nonempty union-closed families supported on a common finite ground set $V$.  Let $w:2^V \to \rr_{\geq 0}$ be defined as in (\ref{eq:mixture}). Then we have the following:
    \begin{enumerate}[(a)]
        \item $w$ is admissible
        \item $\sum_{A \subseteq V}w(A) = 1$ and
        \item $p_x (w) = \frac{1}{m} \sum_{j=1}^mq_{\calF_j}(x)$ for all $x \in V$.
    \end{enumerate}
\end{lemma}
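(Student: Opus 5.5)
The plan is to verify the three parts directly from \eqref{eq:mixture}, using the nesting only in part (a).

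For (b), we interchange the finite sums:
\[
\sum_{A\subseteq V}w(A)=\frac1m\sum_{j=1}^m\frac{1}{|\F_j|}\sum_{A\subseteq V}\mathbf{1}_{\F_j}(A)=\frac1m\sum_{j=1}^m 1=1.
\]
This uses that each $\F_j$ is nonempty, so $|\F_j|\ge1$ and the weights are well defined. In particular $w$ is nonzero, which is part of the definition of admissible.

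For (c), we interchange sums in the same way:
\[
\sum_{A\ni x}w(A)=\frac1m\sum_{j=1}^m\frac{|(\F_j)_x|}{|\F_j|}=\frac1m\sum_j q_{\F_j}(x).
\]
Dividing by the total weight, which equals $1$ by (b), gives $p_x(w)$.

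For (a), nonnegativity is clear. Also $w(\varnothing)=0$ because every $\F_j$ is empty-free. The real content is the minimum condition $w(A\cup B)\ge\min\{w(A),w(B)\}$, and this is where nesting is needed. Set $J(A)=\{j:A\in\F_j\}$. Since the families are nested, $J(A)$ is an up-interval $\{a,\dots,m\}$, or empty if $A\notin\F_m$. Thus
\[
w(A)=\frac1m\sum_{j\ge a}\frac{1}{|\F_j|},
\]
which is a nonincreasing function of the starting index $a$.

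Now take $A$ and $B$. If either $J(A)$ or $J(B)$ is empty, the minimum is $0$ and the inequality holds trivially. Otherwise, let $J(A)$ start at $a$ and $J(B)$ start at $b$. For every $j\ge\max(a,b)$, both $A$ and $B$ lie in $\F_j$. Since $\F_j$ is union-closed, $A\cup B\in\F_j$ as well. Hence $J(A\cup B)\supseteq J(A)\cap J(B)$, and the latter is the tail starting at $\max(a,b)$. Because all terms are nonnegative,
\[
w(A\cup B)\ge\frac1m\sum_{j\ge\max(a,b)}\frac1{|\F_j|}=\min\{w(A),w(B)\}.
\]
The main point, and the only step that is not pure bookkeeping, is this observation that nestedness makes $J(A)\cap J(B)$ equal to the smaller of the two tails. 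Without nesting the inequality could fail.
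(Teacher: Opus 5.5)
Your proposal is correct and follows essentially the same route as the paper: (b) and (c) come from interchanging the finite sums, and for (a) nesting makes the stages containing $A$ and $B$ tails, so $A\cup B$ lies in every $\F_j$ from the later starting index onward, giving $w(A\cup B)\ge\min\{w(A),w(B)\}$. You also note explicitly that $w$ is nonzero, which admissibility requires and which follows from (b); the paper leaves this implicit.
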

\begin{proof}
    \begin{enumerate}[(a)]
        \item Since $\calF_m$ does not contain $\emptyset$ by assumption, we know $w(\emptyset) = 0$. Take arbitrary $A, B \subseteq V$ with arbitrary weights. If either weight is zero, the minimum condition follows from nonnegativity.Otherwise, let $a \in [m]$(resp. $b \in [m]$) be the smallest number such that $A \in \calF_a$ (resp. $B \in \calF_b$). Exchanging if necessary, we may assume $a \leq b$. Then by nesting, we know $w(A) \geq w(B)$. Now for every $j \geq b$, both $A, B$ belong to $\calF_j$. Since $\calF_j$ is union-closed, we know $A \cup B \in \calF_j$. Therefore, 
        \[
        w(A \cup B) \geq \frac{1}{m}\sum_{j=b}^m \frac{1}{|\calF_j|} = w(B) = \min \{w(A), w(B)\}.
        \]
        \item Follows immediately since $w(A)$ is the probability of obtaining $A$ via the following procedure: Choose $J$ uniformly from $[m]$ and then choose a set uniformly from $\calF_J$.
        \item By definition, for any $x \in V$
        \[
        p_x(w) =  \sum_{A \ni x}w(A) = \sum_{A \ni x} \frac{1}{m}\Big(\sum_{j=1}^m \frac{\mathbf{1}_{\calF_j}(A)}{|\calF_j|}\Big) = \frac{1}{m} \sum_{j=1}^m q_{\calF_j}(x),
        \]
        where the first equality holds since $\sum_{A \subseteq V}w(A) = 1$ and the last equality follows by switching the order of summation.
    \end{enumerate}
\end{proof}

\begin{lemma}[One unrestricted stage per coordinate]\label{lem:generation-average}
Suppose $r\ge2$ and each $x\in V$ has a label $b(x)\in[m]$ satisfying
\begin{equation}\label{eq:generation-pattern}
 q_{\F_j}(x)=0\quad(j<b(x)),\qquad
 q_{\F_j}(x)\le1/r\quad(j>b(x)).
\end{equation}
Then the weights in \eqref{eq:mixture} satisfy
$\delta(w)\le1/m+(m-1)/(mr)$.
\end{lemma}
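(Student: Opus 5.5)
The plan is to combine part (c) of \Cref{lem:reduce_to_nested} with the pattern \eqref{eq:generation-pattern}, bounding $p_x(w)$ one coordinate at a time. The families are nonempty, nested, union-closed and empty-free, so \Cref{lem:reduce_to_nested} applies. It gives, for every $x\in V$,
\[
 p_x(w)=\frac1m\sum_{j=1}^m q_{\F_j}(x).
\]
Since $\delta(w)$ is the maximum of $p_x(w)$ over $x\in V$, it suffices to show that $p_x(w)\le 1/m+(m-1)/(mr)$ for each fixed $x$.

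Fix $x$ and let $b=b(x)$. I split the sum over $j$ into three ranges: $j<b$, $j=b$, and $j>b$.
\begin{enumerate}[(i)]
\item For $j<b$, the first condition in \eqref{eq:generation-pattern} gives $q_{\F_j}(x)=0$, so these terms contribute nothing.
\item For $j=b$, there is no hypothesis, so I use only the trivial bound $q_{\F_b}(x)\le1$. This holds because $q$ is a ratio of the size of a subfamily to the size of the family.
\item For $j>b$, each term is at most $1/r$, and there are $m-b$ such indices.
\end{enumerate}
Adding the three ranges gives
\[
 p_x(w)\le\frac1m\Bigl(1+\frac{m-b}{r}\Bigr).
\]

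Finally, since $b\ge1$, we have $m-b\le m-1$. Hence $p_x(w)\le 1/m+(m-1)/(mr)$, uniformly in $x$, and taking the maximum over $x$ gives the stated bound on $\delta(w)$.

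There is no real obstacle here; the argument is bookkeeping. The only point to be careful about is that the sums in \Cref{lem:reduce_to_nested} are well defined, which requires each $\F_j$ to be nonempty so that $|\F_j|>0$. This is part of the standing hypotheses of this section. The hypothesis $r\ge2$ is not needed for the inequality itself.
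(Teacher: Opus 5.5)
Your proposal is correct and follows the paper's proof: apply part (c) of \Cref{lem:reduce_to_nested}, use $0$ for the stages $j<b(x)$, the trivial bound $1$ at stage $b(x)$, and $1/r$ for each of the $m-b(x)$ later stages, then conclude with $b(x)\ge1$. You are also right that the hypothesis $r\ge2$ is not used for this inequality.
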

\begin{proof}
Think of $b(x)$ as the generation in which $x$ was introduced. The only
unrestricted frequency is at that stage, where the trivial bound is $1$.
 If $b(x)=k$,
Lemma~\ref{lem:reduce_to_nested} gives
\[
 p_x(w)\le\frac1m\left(1+\frac{m-k}{r}\right)
          \le\frac1m+\frac{m-1}{mr}.
\]
The parameter $m$ dilutes the one potentially large contribution, while $r$
controls the accumulated contributions from later stages.
\end{proof}

\section{Cloning and unions with old sets}\label{sec:cloning}
In this section, we define the cloning map and prove a few properties of the map. A \emph{cloning map} replaces each coordinate $v$ by a disjoint nonempty
block $E_v$ and maps $A$ to $\bigcup_{v\in A}E_v$. Our maps have the following
explicit form. Fix a ground set $V$ of size $n\ge1$ and integers $r\ge2$,
$t\ge1$. Form
\begin{equation}\label{eq:clone}
 Y_j=V\times\{j\}\times[t],\qquad
 Y=\bigsqcup_{j=1}^rY_j,\qquad E(A)=A\times[r]\times[t].
\end{equation}
Thus every old coordinate has $t$ copies in each of $r$ blocks. We verify
separately that cloning preserves the old structure and that it reduces
the number of distinct unions with an old set. Define \(E(\mathcal H):=\{E(A):A\in\mathcal H\}\).

\begin{lemma}[Cloning preserves the set operations]\label{lem:clone-structure}
The map $E$ is injective, sends $\varnothing$ to $\varnothing$, and satisfies
$E(A\cup B)=E(A)\cup E(B)$. In particular, applying $E$ to every family in a
nested union-closed chain preserves nesting and union closure.
\end{lemma}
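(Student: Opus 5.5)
The plan is to verify the three set-theoretic identities directly from the definition $E(A)=A\times[r]\times[t]$ in \eqref{eq:clone}, and then deduce the statements about chains formally.

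First, $E(\varnothing)=\varnothing\times[r]\times[t]=\varnothing$. For unions, the Cartesian product distributes over union in its first factor:
\[
E(A\cup B)=(A\cup B)\times[r]\times[t]=(A\times[r]\times[t])\cup(B\times[r]\times[t])=E(A)\cup E(B).
\]
For injectivity, we recover $A$ from $E(A)$ as its projection onto the first coordinate,
\[
A=\{v\in V:(v,1,1)\in E(A)\}.
\]
This uses that $[r]$ and $[t]$ are nonempty, since $r\ge2$ and $t\ge1$. So $E(A)=E(B)$ forces $A=B$.

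For the consequences, let $\F_1\subseteq\cdots\subseteq\F_m$ be a nested chain of union-closed families.
\begin{itemize}
\item \emph{Nesting.} If $\F_i\subseteq\F_j$, then every $E(A)$ with $A\in\F_i$ also lies in $E(\F_j)$, so $E(\F_i)\subseteq E(\F_j)$.
\item \emph{Union closure.} Take $E(A),E(B)\in E(\F_j)$. Then $E(A)\cup E(B)=E(A\cup B)$, and $A\cup B\in\F_j$ because $\F_j$ is union-closed, so the union lies in $E(\F_j)$.
\item \emph{Empty-freeness.} If $\varnothing\notin\F_j$, then $\varnothing\notin E(\F_j)$. Indeed, $E(A)=\varnothing=E(\varnothing)$ would force $A=\varnothing$ by injectivity.
\end{itemize}
Injectivity also gives $|E(\F_j)|=|\F_j|$, which is presumably used later.

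There is no real obstacle here; everything is immediate from the definitions. The only point needing care is that injectivity relies on $[r]\times[t]$ being nonempty.
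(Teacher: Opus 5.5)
Your proof is correct and follows essentially the same route as the paper, which uses the membership rule $(v,j,\ell)\in E(A)\iff v\in A$ (your distributivity and projection argument, relying on every coordinate having a copy) to get injectivity, $E(\varnothing)=\varnothing$, and the union identity. It then deduces nesting and union closure formally, just as you do. Your extra empty-freeness check is a harmless bonus that the paper later invokes without separate proof in the chain-extension lemma.
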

\begin{proof}
For a copy $y=(v,j,\ell)$, the statement $y\in E(A)$ is equivalent to $v\in A$.
Because every coordinate has a copy, different old sets have different
images. The same membership rule proves the union identity and the
empty-set assertion. The identity proves preservation of union closure,
and applying any map elementwise to families preserves their inclusions.
\end{proof}

\begin{lemma}[Every copy has its original frequency]\label{lem:clone-frequency}
For any nonempty family $\Hh\subseteq2^V$ and any $y=(v,j,\ell)\in Y$,
\[
 |E(\Hh)|=|\Hh|,\qquad q_{E(\Hh)}(y)=q_{\Hh}(v).
\]
\end{lemma}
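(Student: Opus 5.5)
The plan is to reduce both identities to \Cref{lem:clone-structure} together with the membership rule from \eqref{eq:clone}: for a copy $y=(v,j,\ell)$ one has $y\in E(A)$ if and only if $v\in A$.

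For the cardinality statement, \Cref{lem:clone-structure} says $E$ is injective on $2^V$. Its restriction to $\Hh$ is therefore a bijection from $\Hh$ onto $E(\Hh)$, which gives $|E(\Hh)|=|\Hh|$. Since $\Hh$ is nonempty, $E(\Hh)$ is nonempty as well, so $q_{E(\Hh)}(y)$ is defined.

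For the frequency statement, we compute the family of members of $E(\Hh)$ that contain $y$. Every member of $E(\Hh)$ has the form $E(A)$ for exactly one $A\in\Hh$. By the membership rule, $E(A)$ contains $y$ precisely when $A\in\Hh_v$. Hence $E(\Hh)_y=E(\Hh_v)$. Applying injectivity again, now to the subfamily $\Hh_v$, gives $|E(\Hh)_y|=|\Hh_v|$. Dividing by $|E(\Hh)|=|\Hh|$ then yields $q_{E(\Hh)}(y)=q_{\Hh}(v)$.

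No real obstacle is expected. The only point that needs care is that the identification $E(\Hh)_y=E(\Hh_v)$ relies on the uniqueness of the preimage $A$. Injectivity is exactly what rules out two different old sets, one containing $v$ and one not, having the same image.
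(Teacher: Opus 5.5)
Your proof is correct and matches the paper's argument: injectivity of $E$ gives $|E(\Hh)|=|\Hh|$, the membership rule $y\in E(A)\Leftrightarrow v\in A$ gives $|E(\Hh)_y|=|\Hh_v|$, and you divide. One small correction to your closing remark: the set identity $E(\Hh)_y=E(\Hh_v)$ already follows from the membership rule alone, since whether $y$ lies in $E(A)$ depends only on the image set. Injectivity is needed only for the count $|E(\Hh_v)|=|\Hh_v|$.
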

\begin{proof}
Injectivity gives the first equality. The equivalence
$y\in E(A)\Longleftrightarrow v\in A$ gives
$|E(\Hh)_y|=|\Hh_v|$, and division proves the second.
Cloning therefore does not itself make a coordinate rare. It preserves all
previous frequencies while changing how old sets interact with sets to be
introduced next.
\end{proof}

\begin{lemma}[An old set collapses free choices]\label{lem:clone-collapse}
For every nonempty $A\subseteq V$ and every $j\in[r]$,
\begin{equation}\label{eq:clone-collapse}
 \left|\{E(A)\cup C:C\subseteq Y_j\}\right|
 =2^{(n-|A|)t}\le2^{nt-t}.
\end{equation}
\end{lemma}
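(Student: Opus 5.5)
The plan is to describe the set $\{E(A)\cup C : C\subseteq Y_j\}$ explicitly and count it through a bijection. Split $Y_j$ into two disjoint parts:
\[
 P=A\times\{j\}\times[t]=E(A)\cap Y_j,\qquad Q=(V\setminus A)\times\{j\}\times[t]=Y_j\setminus P.
\]
The part $P$ has $|A|t$ elements and $Q$ has $(n-|A|)t$ elements. Write any $C\subseteq Y_j$ as $C=(C\cap P)\sqcup(C\cap Q)$. Since $P\subseteq E(A)$, we get $E(A)\cup C=E(A)\cup(C\cap Q)$. So every output has the form $E(A)\sqcup D$ with $D\subseteq Q$. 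Conversely, every such set arises by taking $C=D$.

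Next I will check that the map $D\mapsto E(A)\cup D$ is injective on subsets $D\subseteq Q$. By \eqref{eq:clone} the copies of a coordinate $v\notin A$ do not lie in $E(A)$, so $Q\cap E(A)=\varnothing$. Hence $D$ is recovered as $(E(A)\cup D)\cap Q$. This gives a bijection between the output set and $2^Q$, so the count is exactly $2^{(n-|A|)t}$.

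For the inequality, note that $A$ is nonempty, so $|A|\ge1$ and $(n-|A|)t\le nt-t$. Since $2^x$ is monotone, this gives $2^{(n-|A|)t}\le 2^{nt-t}$.

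There is no real obstacle here. The only point needing care is the disjointness $Q\cap E(A)=\varnothing$, which makes the map injective. It follows from the membership rule $y=(v,j,\ell)\in E(A)\iff v\in A$ established in the proof of \Cref{lem:clone-structure}.
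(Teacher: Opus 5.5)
Your proof is correct and takes the same approach as the paper: the $|A|t$ positions of $E(A)\cap Y_j$ are forced, and the $(n-|A|)t$ positions of $Y_j\setminus E(A)$ vary freely. Your bijection $D\mapsto E(A)\cup D$, with inverse $S\mapsto S\cap Q$, simply makes explicit the paper's remark that ``every such pattern is attained''.
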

\begin{proof}
Before taking the union, there are $2^{nt}$ choices for $C$. Outside $Y_j$
the output is fixed. Inside $Y_j$, all $|A|t$ positions of $E(A)\cap Y_j$
are forced to be present, regardless of their status in $C$. The remaining
$(n-|A|)t$ positions can vary arbitrarily, and every such pattern is attained.
This proves the equality. Since $A\ne\varnothing$, at least $t$ positions
are forced, giving the inequality.
\end{proof}

\section{Principal classes and auxiliary blocks}\label{sec:principal}
Use the same notations as in \Cref{sec:cloning}. Introduce disjoint sets
$X_1,\ldots,X_r$, disjoint from $Y$, with
\[
 |X_j|=s:=(n+1)t,\qquad X=\bigsqcup_{j=1}^rX_j.
\]
Define the principal classes and their common size parameter by
\begin{equation}\label{eq:principal}
 \Pp_j=\{X_j\cup B\cup C:B\subseteq X\setminus X_j,
                               \ C\subseteq Y_j\},\qquad
 L=2^{(r-1)s+nt}.
\end{equation}
An additional class will free $nt$ old-coordinate positions but force $s$
auxiliary positions. The difference $s-nt=t$ will control mixed unions.

\begin{lemma}[Size and closure of a principal class]\label{lem:principal-size}
Each $\Pp_j$ is an empty-free union-closed family of size $L$.
\end{lemma}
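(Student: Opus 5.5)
The plan is to verify the three claims of Lemma~\ref{lem:principal-size} (size, empty-freeness, union closure) directly from the definition \eqref{eq:principal}, using only that $X_1,\ldots,X_r$ and $Y$ are pairwise disjoint.

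\emph{Size.} I would show that the map $(B,C)\mapsto X_j\cup B\cup C$, for $B\subseteq X\setminus X_j$ and $C\subseteq Y_j$, is a bijection onto $\Pp_j$. The three pieces $X_j$, $X\setminus X_j$ and $Y_j$ are pairwise disjoint, so the pair is recovered from the image as
\[
B=S\cap(X\setminus X_j),\qquad C=S\cap Y_j,
\qquad\text{where } S=X_j\cup B\cup C.
\]
Hence $|\Pp_j|=2^{|X\setminus X_j|}\cdot 2^{|Y_j|}$. Since $|X\setminus X_j|=(r-1)s$ and $|Y_j|=|V|\cdot t=nt$, this equals $L$.

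\emph{Empty-freeness.} Every member of $\Pp_j$ contains $X_j$, and $|X_j|=s=(n+1)t\ge 2>0$. So no member is empty.

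\emph{Union closure.} Take two members $X_j\cup B\cup C$ and $X_j\cup B'\cup C'$. Their union is
\[
X_j\cup(B\cup B')\cup(C\cup C').
\]
Here $B\cup B'\subseteq X\setminus X_j$ and $C\cup C'\subseteq Y_j$, so the union is again in $\Pp_j$.

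Every step is routine. The only point needing care is the disjointness used for injectivity in the size count, and that is guaranteed by the construction.
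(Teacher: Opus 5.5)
Your proof is correct and follows the same route as the paper: union closure is checked componentwise, and the size is the product $2^{|X\setminus X_j|}\cdot 2^{|Y_j|}=2^{(r-1)s+nt}=L$. You are slightly more complete than the paper, which leaves two points implicit: the injectivity of $(B,C)\mapsto X_j\cup B\cup C$, which follows from the disjointness of $X_j$, $X\setminus X_j$ and $Y_j$, and the empty-freeness, which holds because every member contains the nonempty block $X_j$.
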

\begin{proof}
It is easy to see $\calP_j$ is union-closed. On the other hand,
\[
|\calP_j| = 2^{|X \backslash X_j|} \times 2^{|Y_j|} = 2^{(r-1)s+nt}
\]
\end{proof}

\begin{lemma}[Old-coordinate incidences in the principal part]\label{lem:principal-degree}
If $\Pp=\bigcup_{i=1}^r\Pp_i$, then for every $y\in Y$,
\begin{equation}\label{eq:principal-degree}
 |\Pp_y|=L/2.
\end{equation}
\end{lemma}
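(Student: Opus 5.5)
Fix $y\in Y$; then $y\in Y_j$ for exactly one $j\in[r]$. The plan is to show that $\Pp_y$ is exactly the set of members of $\Pp_j$ containing $y$, and then count these directly from the parametrization in \eqref{eq:principal}.

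First we check that $y$ lies in no member of $\Pp_i$ for $i\ne j$. A member of $\Pp_i$ has the form $X_i\cup B\cup C$ with $B\subseteq X$ and $C\subseteq Y_i$. Since $X$ is disjoint from $Y$ and $Y_i\cap Y_j=\varnothing$, this set meets $Y$ only inside $Y_i$, so it does not contain $y$. Hence $\Pp_y=(\Pp_j)_y$.

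Next we count $(\Pp_j)_y$. The map $(B,C)\mapsto X_j\cup B\cup C$, with $B\subseteq X\setminus X_j$ and $C\subseteq Y_j$, is a bijection onto $\Pp_j$. It is injective because $X_j$, $X\setminus X_j$ and $Y_j$ are pairwise disjoint, so $B$ and $C$ can be read off as the intersections of the image with $X\setminus X_j$ and with $Y_j$; this is the same fact behind the size count in \Cref{lem:principal-size}. Under this bijection, the condition $y\in X_j\cup B\cup C$ is equivalent to $y\in C$. Exactly half of the $2^{nt}$ subsets $C\subseteq Y_j$ contain $y$, and $nt\ge1$ so this halving is legitimate. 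Therefore
\[
|(\Pp_j)_y| = 2^{(r-1)s}\cdot 2^{nt-1} = L/2 .
\]

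This establishes \eqref{eq:principal-degree}. No step poses a real obstacle. The only point requiring care is the disjointness of the blocks, which is used both for the reduction $\Pp_y=(\Pp_j)_y$ and for the injectivity of the parametrization. Note that $\Pp$ is a union of families, so $\Pp_y$ is the family of members of $\Pp$ containing $y$, and the first step shows that only $\Pp_j$ contributes to it.
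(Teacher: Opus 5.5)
Your proof is correct and follows the same route as the paper: $y\in Y_j$ is absent from every $\Pp_i$ with $i\ne j$, and it is a freely chosen coordinate in $\Pp_j$, so exactly half of the $L$ members of $\Pp_j$ contain it. You additionally spell out the bijection $(B,C)\mapsto X_j\cup B\cup C$ and the block disjointness that the paper's short argument leaves implicit.
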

\begin{proof}
A coordinate $y\in Y_j$ is freely chosen in $\Pp_j$, so exactly half that
class contains it. It is absent from every other principal class. Hence
there is no overlap to subtract among sets containing $y$, and the count
is exactly $L/2$.
\end{proof}

\begin{lemma}[Unions from a fixed collection of classes]\label{lem:class-unions}
For nonempty $I\subseteq[r]$ with $|I|=k$, let
$\U_I=\{\bigcup_{j\in I}P_j:P_j\in\Pp_j\}$. Then
\begin{equation}\label{eq:class-unions}
 |\U_I|=2^{(r-k)s+knt}=L2^{-(k-1)t}.
\end{equation}
\end{lemma}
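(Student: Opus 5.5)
The plan is to describe $\U_I$ exactly as an explicit product family and then count it. I claim
\[
 \U_I=\Bigl\{X_I\cup B\cup C:\ B\subseteq X\setminus X_I,\ C\subseteq Y_I\Bigr\},
 \qquad X_I:=\bigsqcup_{j\in I}X_j,\quad Y_I:=\bigsqcup_{j\in I}Y_j .
\]
Once this identity holds, the count is immediate. The set $X\setminus X_I$ has $(r-k)s$ elements, and $Y_I$ has $knt$ elements. Different pairs $(B,C)$ give different sets, because $X_I$, $X\setminus X_I$ and $Y_I$ are pairwise disjoint. Hence $|\U_I|=2^{(r-k)s+knt}$.

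For the inclusion $\subseteq$, take $P_j=X_j\cup B_j\cup C_j\in\Pp_j$ for each $j\in I$. The union contains every $X_j$ with $j\in I$, so it contains $X_I$. Its remaining $X$-part lies in $X$, and its $Y$-part $\bigcup C_j$ lies in $Y_I$, since each $C_j\subseteq Y_j$.

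For the inclusion $\supseteq$, which is the only step needing a choice, fix $B\subseteq X\setminus X_I$ and $C\subseteq Y_I$. For each $j\in I$ set
\[
 P_j=X_j\cup B\cup (C\cap Y_j).
\]
This is a member of $\Pp_j$, because $B\subseteq X\setminus X_I\subseteq X\setminus X_j$ and $C\cap Y_j\subseteq Y_j$. Taking the union over $j\in I$ gives $X_I\cup B\cup C$. Here $I$ must be nonempty so that $B$ is actually included; this is where the hypothesis $I\neq\varnothing$ enters.

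The second equality is arithmetic using $s=(n+1)t$:
\[
 (r-k)s+knt=(r-1)s+nt-(k-1)(s-nt)=(r-1)s+nt-(k-1)t .
\]
This gives $|\U_I|=L2^{-(k-1)t}$.

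I do not expect a real obstacle. The only points needing care are disjointness in the injectivity of $(B,C)\mapsto X_I\cup B\cup C$, and noting that the free $X$-positions coming from different classes cannot recover the forced blocks $X_j$ for $j\in I$.
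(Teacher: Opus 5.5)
Your proposal is correct and matches the paper's proof: identify $\U_I$ as the family with all $X_j$ ($j\in I$) forced and the positions of $X\setminus X_I$ and $\bigcup_{j\in I}Y_j$ free, count, and use $s-nt=t$. The only difference is cosmetic: you put $B$ into every $P_j$, whereas the paper puts it into a single chosen principal set, and both choices work.
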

\begin{proof}
In any output, all $X_j$ with $j\in I$ are full. The remaining $X$-positions
are free, and the old part is an arbitrary subset of
$\bigcup_{j\in I}Y_j$. Conversely, every set with this description is
attainable: choose its $Y_j$-part in the corresponding $P_j$, and put its
desired $X$-positions outside the forced blocks into one chosen principal
set. Thus there are $(r-k)s+knt$ free positions. Since $s-nt=t$,
\[
 (r-k)s+knt=((r-1)s+nt)-(k-1)t,
\]
which proves the formula.
\end{proof}

\section{Counting the extension}\label{sec:counting}
Now let $\varnothing\ne\F\subseteq2^V\setminus\{\varnothing\}$ be the old
union-closed family, and set $M=|\F|$. Use the blocks above and let $\G$ be
all nonempty finite unions of members of
\begin{equation}\label{eq:generators}
 E(\F)\cup\Pp_1\cup\cdots\cup\Pp_r.
\end{equation}
Write $\Pp=\bigcup_j\Pp_j$ and
\begin{equation}\label{eq:remainder}
 R=|\G\setminus\Pp|.
\end{equation}
We keep $t$ arbitrary until \Cref{lem:remainder-small}. First we identify
all possible outputs, then lower-bound the principal contribution, and then
bound the different sources of additional sets.

\begin{lemma}[A normal form for every generated set]\label{lem:normal-form}
Every $S\in\G$ has a representation
\begin{equation}\label{eq:normal-form}
 S=E(A)\cup\bigcup_{j\in I}P_j,
 \qquad A\in\F\cup\{\varnothing\},\quad I\subseteq[r],\quad P_j\in\Pp_j,
\end{equation}
where $(A,I)\ne(\varnothing,\varnothing)$. Conversely, every such union lies
in $\G$.
\end{lemma}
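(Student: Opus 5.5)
The plan has two directions. For the forward direction, I show that the set of all unions of the displayed form \eqref{eq:normal-form} contains the generators and is closed under union; since every member of $\G$ is a finite union of generators, this gives the claim.

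Each generator already has the required form. A member $E(A)$ of $E(\F)$ is the case $A\in\F$, $I=\varnothing$. A member $P\in\Pp_j$ is the case $A=\varnothing$, $I=\{j\}$, $P_j=P$, using $E(\varnothing)=\varnothing$ from \Cref{lem:clone-structure}. In both cases $(A,I)\ne(\varnothing,\varnothing)$.

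For closure, take two such unions $E(A)\cup\bigcup_{j\in I}P_j$ and $E(A')\cup\bigcup_{j\in I'}P'_j$.
\begin{itemize}
\item The old parts combine as $E(A)\cup E(A')=E(A\cup A')$ by \Cref{lem:clone-structure}. Here $A\cup A'\in\F\cup\{\varnothing\}$: if both are nonempty this follows from union closure of $\F$, and otherwise the union is just one of them.
\item For $j\in I\cap I'$, replace $P_j\cup P'_j$ by a single member of $\Pp_j$, using \Cref{lem:principal-size}.
\item For $j$ in exactly one of $I,I'$, keep the available $P_j$ or $P'_j$.
\end{itemize}
The result has the form \eqref{eq:normal-form} with index set $I\cup I'$. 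The pair $(A\cup A',I\cup I')$ is nonempty because the first pair was. By induction on the number of generators in a finite union, every $S\in\G$ has the stated representation.

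For the converse, a union of the form \eqref{eq:normal-form} with $(A,I)\ne(\varnothing,\varnothing)$ is a nonempty finite union of generators. Drop the term $E(A)$ when $A=\varnothing$; since $(A,I)\ne(\varnothing,\varnothing)$, at least one generator remains. Such a union lies in $\G$ by definition.

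The only delicate point is the bookkeeping around the empty set. The convention excludes the empty union, and $\varnothing\notin\F$. So I must make sure that $A=\varnothing$ is only used as a placeholder when $I\neq\varnothing$, and that $A\cup A'$ stays in $\F\cup\{\varnothing\}$. Beyond that the argument is routine.
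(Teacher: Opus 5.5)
Your proof is correct and is essentially the paper's argument. The paper takes a given finite representation of $S$ and merges all old generators into a single $E(A)$, using that $E$ preserves unions and $\F$ is union-closed, and merges all generators from each $\Pp_j$ into one member via \Cref{lem:principal-size}; you carry out the same merging two terms at a time and induct on the number of generators, with the converse in both cases being the definition of $\G$.
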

\begin{proof}
Combine all old generators in a representation of $S$ into one. Their union
has the form $E(A)$ with $A\in\F$, since $E$ preserves unions and $\F$ is
union-closed. If no old generator is used, set $A=\varnothing$. Also combine
all generators from a given class into one member of that class, using
\Cref{lem:principal-size}. This gives \eqref{eq:normal-form}.
The converse is exactly the definition of $\G$.

\end{proof}

Notice that the representation need not be unique. Uniqueness is unnecessary: all the
counts of additional sets below are upper bounds, so multiple counting only
makes them more conservative.

\begin{lemma}[Many distinct principal sets]\label{lem:denominator}
The completed family satisfies
\begin{equation}\label{eq:denominator}
 |\G|\ge rL(1-2^{-nt}).
\end{equation}
\end{lemma}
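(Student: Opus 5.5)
Since $\Pp_1,\ldots,\Pp_r\subseteq\G$, it suffices to show $|\Pp_1\cup\cdots\cup\Pp_r|\ge rL(1-2^{-nt})$. I would use the lower Bonferroni bound
\[
 \Bigl|\bigcup_{j}\Pp_j\Bigr|\ \ge\ \sum_j|\Pp_j|-\sum_{i<j}|\Pp_i\cap\Pp_j|,
\]
where $|\Pp_j|=L$ for each $j$ by \Cref{lem:principal-size}. A cleaner alternative is to exhibit, for each $j$, a subfamily of $\Pp_j$ of size $L(1-2^{-nt})$ whose members lie in no other class. These subfamilies are then disjoint for distinct $j$, and summing over $j$ gives the bound directly.

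The key structural fact is what membership in a class forces. A member $S=X_j\cup B\cup C$ of $\Pp_j$ satisfies $S\cap Y\subseteq Y_j$. If $S$ also lies in $\Pp_i$ with $i\ne j$, then $S\cap Y\subseteq Y_i$ as well. Since $Y_i\cap Y_j=\varnothing$, this forces $C=S\cap Y_j=\varnothing$.

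So I would take the subfamily $\{X_j\cup B\cup C\in\Pp_j: C\ne\varnothing\}$. Its members are pairwise distinct because the representation $X_j\cup B\cup C$ is determined by $S$: the parts $B$ and $C$ are recovered as $S\cap(X\setminus X_j)$ and $S\cap Y_j$. Its size is $L-2^{(r-1)s}=L(1-2^{-nt})$, since $|Y_j|=nt$. By the observation above, no member of this subfamily lies in $\Pp_i$ for any $i\neq j$.

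Summing over $j\in[r]$ gives $r$ pairwise disjoint subfamilies of $\G$ with total size $rL(1-2^{-nt})$, which proves \eqref{eq:denominator}.

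There is no real obstacle here. The only point needing care is distinctness: different $j$ contribute different sets because their $Y$-parts are nonempty and lie in disjoint blocks, and within one class distinctness follows from the recoverability of $(B,C)$ from $S$.
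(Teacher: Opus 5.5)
Your proposal is correct and is essentially the paper's proof: you keep the members of each $\Pp_j$ with nonempty $Y_j$-part, count them as $L-2^{(r-1)s}=L(1-2^{-nt})$, and use the disjointness of the blocks $Y_j$ to see that these subfamilies are pairwise disjoint. Your check that $(B,C)$ is recoverable from $S$ makes the within-class count explicit where the paper leaves it implicit, and the Bonferroni bound you mention first is not needed.
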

\begin{proof}
Within $\Pp_j$, retain only the sets whose $Y_j$-part is nonempty. There are
$2^{(r-1)s}(2^{nt}-1)=L(1-2^{-nt})$ such sets. The retained classes are
pairwise disjoint, because their nonempty old parts are supported in
different $Y$-blocks. Summing over $j$ gives the result.
\end{proof}

\begin{lemma}[One principal class joined to old sets]\label{lem:one-class-old}
The family
\[
 \Hh_1=\{E(A)\cup P:A\in\F,\ j\in[r],\ P\in\Pp_j\}
\]
has size at most $rML2^{-t}$.
\end{lemma}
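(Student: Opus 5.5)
We bound $|\Hh_1|$ by a union bound over the choice of the pair $(A,j)$, and for each fixed pair we count the distinct outputs using \Cref{lem:clone-collapse}. Concretely, write
\[
 \Hh_1=\bigcup_{A\in\F}\bigcup_{j=1}^r \Hh_{A,j},\qquad
 \Hh_{A,j}=\{E(A)\cup P:P\in\Pp_j\},
\]
so that $|\Hh_1|\le\sum_{A,j}|\Hh_{A,j}|$. There are $M$ choices of $A$ and $r$ of $j$, so it suffices to show $|\Hh_{A,j}|\le L2^{-t}$ for every nonempty $A\in\F$ and every $j\in[r]$. Overcounting coincidences between different pairs is harmless, as noted after \Cref{lem:normal-form}.

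Fix $A\in\F$, which is nonempty since $\F$ is empty-free, and fix $j$. A member $P=X_j\cup B\cup C$ of $\Pp_j$ has parts lying in the disjoint regions $X$ and $Y_j$. The set $E(A)$ lies in $Y$, and $E(A)\setminus Y_j$ is the same for every $P$. Hence
\[
 E(A)\cup P=\bigl(X_j\cup B\bigr)\cup\bigl(E(A)\setminus Y_j\bigr)\cup\bigl((E(A)\cap Y_j)\cup C\bigr),
\]
and the output is determined by the pair $\bigl(B,\;E(A)\cup C\bigr)$: the first coordinate is the $X$-part and the second recovers the $Y_j$-part. Thus $|\Hh_{A,j}|$ is at most the number of choices of $B$ times the number of distinct sets $E(A)\cup C$ with $C\subseteq Y_j$.

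The number of choices of $B\subseteq X\setminus X_j$ is $2^{(r-1)s}$. By \Cref{lem:clone-collapse}, the number of distinct sets $E(A)\cup C$ is at most $2^{nt-t}$. Multiplying gives $|\Hh_{A,j}|\le2^{(r-1)s+nt-t}=L2^{-t}$, and summing over the $rM$ pairs gives $|\Hh_1|\le rML2^{-t}$.

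The only delicate point is the factorization step: one must check that the map $P\mapsto E(A)\cup P$ factors through $(B,E(A)\cup C)$. This holds because $X$ and $Y$ are disjoint and $E(A)\setminus Y_j$ is a fixed set independent of $P$.
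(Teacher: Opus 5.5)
Your proposal is correct and follows the paper's proof essentially verbatim. Both fix $(A,j)$, count $2^{(r-1)s}$ choices for the $X$-part times at most $2^{nt-t}$ old parts via \Cref{lem:clone-collapse}, and take a union bound over the $rM$ pairs. Your explicit identity $E(A)\cup P=X_j\cup B\cup(E(A)\cup C)$ simply spells out the factorization the paper uses implicitly.
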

\begin{proof}
Fix $A\in\F$ and $j\in[r]$. In $E(A)\cup P$, the $X$-part has
$2^{(r-1)s}$ choices. \Cref{lem:clone-collapse} counts the possible
old parts, so the exact number of outputs for this fixed pair is
\[
 2^{(r-1)s+(n-|A|)t}=L2^{-|A|t}\le L2^{-t}.
\]
There are $rM$ choices of $(A,j)$. A union bound proves the statement.
\end{proof}

\begin{lemma}[Outputs using at least two principal classes]\label{lem:several-classes-old}
Let $\Hh_{\ge2}$ consist of all sets of the form \eqref{eq:normal-form}
with $|I|\ge2$. Then
\begin{equation}\label{eq:several-bound}
 |\Hh_{\ge2}|\le(M+1)2^rL2^{-t}.
\end{equation}
\end{lemma}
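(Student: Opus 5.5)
We would organise $\Hh_{\ge2}$ by the pair $(A,I)$ appearing in the normal form \eqref{eq:normal-form}: $A$ ranges over $\F\cup\{\varnothing\}$, giving $M+1$ choices, and $I$ ranges over subsets of $[r]$ with $|I|\ge2$, giving fewer than $2^r$ choices. It then suffices to show that for each fixed pair $(A,I)$ with $|I|=k\ge2$, the set
\[
 \left\{E(A)\cup U: U\in\U_I\right\}
\]
has at most $L2^{-t}$ elements. A union bound over the at most $(M+1)2^r$ pairs then gives \eqref{eq:several-bound}. As noted after \Cref{lem:normal-form}, the normal form need not be unique, but this does not matter: we only need that every element of $\Hh_{\ge2}$ appears in at least one of these fixed-pair families.

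For a fixed pair, the map $U\mapsto E(A)\cup U$ is a function on $\U_I$. Its image therefore has size at most $|\U_I|$, which by \Cref{lem:class-unions} equals $L2^{-(k-1)t}$. Since $k\ge2$, this is at most $L2^{-t}$. This is the point made in the overview, that adding a fixed old set cannot increase the count. For $A=\varnothing$ the image is just $\U_I$, and the same bound holds.

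No step should be a real obstacle. The only care needed is to confirm that the union bound is legitimate despite the non-uniqueness of representations, and to note that the cruder bound $L2^{-(k-1)t}\le L2^{-t}$ suffices. We do not need the sharper dependence on $k$, nor the further collapse coming from $E(A)$.
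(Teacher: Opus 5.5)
Your proposal is correct and follows essentially the same route as the paper. The paper also fixes $(A,I)$, uses that $U\mapsto E(A)\cup U$ cannot increase the count beyond $|\U_I|=L2^{-(k-1)t}\le L2^{-t}$, and takes a union bound over the $M+1$ choices of $A$ and at most $\sum_{k\ge2}\binom rk\le 2^r$ choices of $I$.
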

\begin{proof}
For fixed $A$ and $I$ with $|I|=k$, the map $U\mapsto E(A)\cup U$ cannot
increase the number of outputs. \Cref{lem:class-unions} therefore bounds
that number by $L2^{-(k-1)t}$. There are $M+1$ choices of $A$, including
$\varnothing$, and $\binom rk$ choices of $I$. Hence
\[
 \begin{split}
 |\Hh_{\ge2}|
 &\le(M+1)L\sum_{k=2}^r\binom rk2^{-(k-1)t}\\
 &\le(M+1)L2^{-t}\sum_{k=2}^r\binom rk
 \le(M+1)2^rL2^{-t}.
 \end{split}
\]
\end{proof}

\begin{lemma}[The additional sets are negligible]\label{lem:remainder-small}
With
\begin{equation}\label{eq:parameters}
 t=n+r+3,\qquad s=(n+1)t,
\end{equation}
the remainder in \eqref{eq:remainder} satisfies $R\le L/4$.
\end{lemma}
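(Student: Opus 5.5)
By \Cref{lem:normal-form}, every $S\in\G\setminus\Pp$ has a representation \eqref{eq:normal-form} with $(A,I)\ne(\varnothing,\varnothing)$, and we split by the shape of this representation. If $|I|=0$, then $S=E(A)$ with $A\in\F$, so there are at most $M$ such sets. If $|I|=1$ and $A=\varnothing$, then $S\in\Pp$, so these sets are not counted in $R$. If $|I|=1$ and $A\ne\varnothing$, then $S\in\Hh_1$. In every remaining case $|I|\ge2$, so $S\in\Hh_{\ge2}$.

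Hence, by \Cref{lem:one-class-old} and \Cref{lem:several-classes-old},
\[
 R\le M+rML2^{-t}+(M+1)2^rL2^{-t}.
\]
It remains to bound each term by a fixed fraction of $L$, using the choice $t=n+r+3$. We need an upper bound on $M=|\F|$. Since $\F\subseteq2^V\setminus\{\varnothing\}$, we have $M\le2^n-1<2^n$, and so $M+1\le2^n$.

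For the second term, we use $r\le2^{r-1}$, which gives $rML2^{-t}\le2^{n+r-1-t}L=L/16$. For the third term, $(M+1)2^rL2^{-t}\le2^{n+r-t}L=L/8$. For the first term, $L=2^{(r-1)s+nt}\ge2^{nt}\ge2^{n}\cdot2^{n(t-1)}$, and $2^{n(t-1)}\ge16$ because $n\ge1$ and $t\ge6$. Therefore $M<2^n\le L/16$.

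Adding the three bounds gives $R\le L/16+L/16+L/8=L/4$.

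The only delicate point is making sure that every set outside $\Pp$ has been charged to some case. In particular, the sets with $|I|=1$ and $A=\varnothing$ lie in $\Pp$ and are therefore excluded from $R$. The bare cloned sets $E(A)$ are a separate case and are absorbed by the crude bound $M\le L/16$. Beyond this bookkeeping, the argument is routine arithmetic with the chosen value of $t$.
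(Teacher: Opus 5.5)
Your proposal is correct and follows the paper's proof: the same exhaustive cover via \Cref{lem:normal-form}, the same two counting lemmas giving $R\le M+rML2^{-t}+(M+1)2^rL2^{-t}$, and the same arithmetic with $t=n+r+3$. The only differences are cosmetic. You split the $1/4$ budget term by term as $1/16+1/16+1/8$, whereas the paper merges the terms into $2^{n+r+1-t}$ using $L\ge 2^t$ and $r+1\le 2^r$. You also state explicitly that the case $|I|=1$, $A=\varnothing$ lies in $\Pp$, which the paper leaves implicit.
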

\begin{proof}
By \Cref{lem:normal-form}, a non-principal set either uses no principal
class, exactly one class and an old set, or at least two classes. The first
case has $M$ possible outputs. \Cref{lem:one-class-old}
and \Cref{lem:several-classes-old} give
\begin{equation}\label{eq:remainder-raw}
 R\le M+\bigl[rM+(M+1)2^r\bigr]L2^{-t}.
\end{equation}
This is an exhaustive cover, not a claimed disjoint partition.

Since $\F$ is a nonempty family of nonempty subsets, $n\ge1$ and $M+1\le2^n$.
Also $L\ge2^t$ and $r+1\le2^r$ for $r\ge2$. Dividing
\eqref{eq:remainder-raw} by $L$ and absorbing $M/L$ gives
\[
 \begin{split}
 \frac RL
 &\le\bigl[(r+1)M+(M+1)2^r\bigr]2^{-t}\\
 &\le2^{n+r+1-t}=\frac14.
 \end{split}
\]
Thus the factor $2^{-t}$ absorbs all choices of old sets and principal
classes. The constants are deliberately loose: smallness relative to one
principal class is sufficient.
\end{proof}

\begin{lemma}[Rare-coordinate extension]\label{lem:extension}
For every nonempty empty-free union-closed family $\F\subseteq2^V$ and
integer $r\ge2$, there are disjoint finite sets $Y,X$, a cloning map
$E:2^V\to2^Y$, and an empty-free union-closed family $\G\subseteq2^{Y\sqcup X}$
such that
\begin{equation}\label{eq:extension}
 E(\F)\subseteq\G,\qquad q_\G(y)\le1/r\quad(y\in Y).
\end{equation}
\end{lemma}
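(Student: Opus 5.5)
We prove \Cref{lem:extension} by instantiating the construction of \Cref{sec:cloning,sec:principal,sec:counting} with the parameters of \Cref{lem:remainder-small}. Let $n=|V|$. Since $\F$ is a nonempty family of nonempty sets, $n\ge1$. Set $t=n+r+3$ and $s=(n+1)t$. Form the blocks $Y=\bigsqcup_j Y_j$ and the cloning map $E$ of \eqref{eq:clone}, together with the auxiliary blocks $X=\bigsqcup_j X_j$, disjoint from $Y$. Let $\G$ be the family generated by \eqref{eq:generators}.

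Three structural properties are immediate. First, $E(\F)\subseteq\G$, because every generator lies in $\G$. Second, $\G$ is union-closed by definition as the family of nonempty finite unions. Third, $\G$ is empty-free, since every generator is nonempty: $E(A)\neq\varnothing$ for $A\neq\varnothing$ by injectivity in \Cref{lem:clone-structure}, and each $\Pp_j$ is empty-free by \Cref{lem:principal-size}. Hence a nonempty union of generators is nonempty.

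The substantive step is the frequency bound. Fix $y\in Y$ and split $\G$ into $\Pp$ and $\G\setminus\Pp$. By \Cref{lem:principal-degree}, exactly $L/2$ members of $\Pp$ contain $y$. At most $R$ further members of $\G$ contain $y$, and \Cref{lem:remainder-small} gives $R\le L/4$. So $|\G_y|\le 3L/4$. For the denominator, \Cref{lem:denominator} gives $|\G|\ge rL(1-2^{-nt})$. Since $nt\ge t\ge 6$, this yields $|\G|\ge rL(1-2^{-6})\ge 3rL/4$. Therefore
\[
 q_\G(y)\le \frac{3L/4}{3rL/4}=\frac1r .
\]

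I expect no real obstacle, because all the counting has already been done in the earlier lemmas. The only point needing care is the denominator. The proof should use the disjoint nonempty-$Y$-part subfamilies from \Cref{lem:denominator}, not a sum of $|\Pp_j|$, since the principal classes overlap in sets with empty $Y$-part. It also has to check numerically that the factor $1-2^{-nt}$ is at least $3/4$, which holds because $t\ge 6$.
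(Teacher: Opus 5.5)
Your proposal is correct and follows the paper's proof essentially step for step. It uses the same parameters $t=n+r+3$, $s=(n+1)t$, the same numerator bound $|\G_y|\le L/2+R\le 3L/4$ from \Cref{lem:principal-degree} and \Cref{lem:remainder-small}, and the same denominator bound from \Cref{lem:denominator}. The only differences are cosmetic: you justify $1-2^{-nt}\ge 3/4$ via $nt\ge t\ge 6$ where the paper uses $nt\ge 2$, and you spell out the empty-freeness of the generators slightly more explicitly.
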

\begin{proof}
Use \eqref{eq:parameters} in the construction of \eqref{eq:generators}.
The family is union-closed by construction and contains $E(\F)$. All its
generators are nonempty, so all its members are nonempty.

For $y\in Y$, \Cref{lem:principal-degree} counts $L/2$ principal sets
containing $y$. Even if every additional set contains $y$,
\Cref{lem:remainder-small} yields
\[
 |\G_y|\le L/2+R\le3L/4.
\]
On the other hand, $nt\ge2$, so \Cref{lem:denominator} gives
$|\G|\ge3rL/4$. Consequently,
\[
 q_\G(y)\le\frac{3L/4}{3rL/4}=\frac1r.
\]
The estimate is simultaneous for all inherited coordinates. No bound is
required on the new coordinates in $X$.
\end{proof}

\paragraph{Remark.} The operation embeds the old family after cloning; it does not preserve the old ground set literally. This is sufficient because
\Cref{lem:clone-frequency} preserves each old frequency exactly in the
embedded family. The new ground-set size is
\begin{equation}\label{eq:one-step-size}
 |Y\sqcup X|=rnt+rs=r(2n+1)(n+r+3).
\end{equation}
Thus the operation is finite for every finite input. At the next step,
today's new coordinates become inherited coordinates and can be made rare.

\section{Iteration while preserving the whole chain}\label{sec:iterating}
Fix an integer $r \geq 2$. For a chain $\F_1\subseteq\cdots\subseteq\F_k$ on a common
finite ground set, we say a generation label $b:V\to[k]$ is \emph{valid} if
\begin{equation}\label{eq:valid-labels}
 q_{\F_j}(x)=0\quad(j<b(x)),\qquad
 q_{\F_j}(x)\le1/r\quad(j>b(x)).
\end{equation}
This is exactly the pattern used in \Cref{lem:generation-average}.
We separate one chain extension from the check of its generation labels.

\begin{lemma}[Extend the chain without changing its earlier frequencies]
\label{lem:extend-chain}
Let $\F_1\subseteq\cdots\subseteq\F_k$ be nonempty empty-free union-closed
families on $V$. Apply \Cref{lem:extension} to $\F_k$, obtaining $E,Y,X,\G$.
Set
\[
 \F'_j=E(\F_j)\quad(1\le j\le k),\qquad \F'_{k+1}=\G.
\]
These form a nested nonempty empty-free union-closed chain. For $j\le k$,
\begin{equation}\label{eq:earlier-frequencies}
 q_{\F'_j}(y)=q_{\F_j}(v)\quad\text{if $y$ is a copy of $v$},\qquad
 q_{\F'_j}(z)=0\quad(z\in X).
\end{equation}
\end{lemma}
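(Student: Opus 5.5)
The claim follows by transporting the chain through the cloning map and appending $\G$. We check the chain properties first and then the frequencies.

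\emph{Chain properties.} By \Cref{lem:clone-structure}, $E$ is injective, preserves unions, and sends $\varnothing$ to $\varnothing$. Applying $E$ elementwise to the nested union-closed chain $\F_1\subseteq\cdots\subseteq\F_k$ therefore gives a nested union-closed chain $E(\F_1)\subseteq\cdots\subseteq E(\F_k)$.

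Each $E(\F_j)$ is nonempty because $\F_j$ is nonempty. Each is empty-free: every $A\in\F_j$ is nonempty, and injectivity together with $E(\varnothing)=\varnothing$ forces $E(A)\neq\varnothing$.

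The last inclusion $\F'_k=E(\F_k)\subseteq\G=\F'_{k+1}$ is exactly the first part of \eqref{eq:extension} in \Cref{lem:extension}. That lemma also gives that $\G$ is empty-free and union-closed, and $\G$ is nonempty since it contains $E(\F_k)$.

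One bookkeeping point: all families must live on the common ground set $Y\sqcup X$. We regard $E(\F_j)\subseteq 2^Y\subseteq 2^{Y\sqcup X}$, which changes neither membership nor union-closure.

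\emph{Frequencies.} For $j\le k$ and a copy $y=(v,i,\ell)$ of $v$, \Cref{lem:clone-frequency} applied with $\Hh=\F_j$ gives $q_{E(\F_j)}(y)=q_{\F_j}(v)$ directly. This uses that $\F_j$ is nonempty, so the frequency is defined.

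For $z\in X$, every member of $E(\F_j)$ is a subset of $Y$, and $Y$ is disjoint from $X$. Hence no member contains $z$, so $|E(\F_j)_z|=0$ and $q_{\F'_j}(z)=0$.

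There is no real obstacle here. The only points needing care are that \Cref{lem:extension} is applied to $\F_k$ alone, while the \emph{same} map $E$ is applied to every earlier $\F_j$, and that all families are placed on the enlarged ground set $Y\sqcup X$.
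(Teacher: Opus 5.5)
Your proposal is correct and follows essentially the same route as the paper. It uses Lemma~\ref{lem:clone-structure} for the nesting, union closure and empty-freeness of the transported chain, the extension lemma for the final inclusion $E(\F_k)\subseteq\G$, Lemma~\ref{lem:clone-frequency} for the first frequency identity, and $E(\F_j)\subseteq 2^Y$ with $X\cap Y=\varnothing$ for the second. Your explicit injectivity argument for empty-freeness and your remark about placing everything on the common ground set $Y\sqcup X$ spell out details the paper leaves implicit.
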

\begin{proof}
\Cref{lem:clone-structure} preserves all earlier inclusions and union
closure. The final inclusion is $E(\F_k)\subseteq\G$, supplied by the
extension lemma. Nonemptiness and empty-freeness are preserved as well.
\Cref{lem:clone-frequency} proves the first equality in
\eqref{eq:earlier-frequencies}. Every earlier image is contained in $Y$, so
new coordinates in $X$ are absent, proving the second.

It is important to apply the same map $E$ to \emph{every} earlier family,
not just to $\F_k$. This puts the whole history on one new ground set without
changing any of its frequency data.
\end{proof}

\begin{lemma}[Generation labels survive an extension]\label{lem:labels}
If $b$ is valid for the chain in \Cref{lem:extend-chain}, then a valid
labeling for the enlarged chain is
\[
 b'(y)=b(v)\quad\text{for every copy $y$ of $v$},\qquad
 b'(z)=k+1\quad(z\in X).
\]
\end{lemma}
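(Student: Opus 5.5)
We have to check the two conditions in \eqref{eq:valid-labels} for the enlarged chain $\F'_1\subseteq\cdots\subseteq\F'_{k+1}$ on the ground set $Y\sqcup X$, with labels in $[k+1]$. The plan is to split into two cases according to the type of coordinate: inherited copies $y\in Y$, and new coordinates $z\in X$. In each case we treat the stages $j\le k$ and the new stage $j=k+1$ separately. \Cref{lem:extend-chain} supplies all frequency information for $j\le k$, and \Cref{lem:extension} supplies it for $j=k+1$.

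\textbf{Copies $y\in Y$.} Let $y$ be a copy of $v\in V$, and put $b'(y)=b(v)\in[k]$. For $j\le k$, the first identity in \eqref{eq:earlier-frequencies} gives $q_{\F'_j}(y)=q_{\F_j}(v)$. The validity of $b$ for $v$ then transfers verbatim:
\begin{itemize}
\item for $j<b(v)$ the frequency is $0$;
\item for $b(v)<j\le k$ it is at most $1/r$.
\end{itemize}
The only new index is $j=k+1$, which satisfies $k+1>b'(y)$ because $b(v)\le k$. Here we need $q_{\F'_{k+1}}(y)=q_\G(y)\le 1/r$, which is exactly \eqref{eq:extension}, since $\G$ was produced by \Cref{lem:extension} applied to $\F_k$.

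\textbf{New coordinates $z\in X$.} Put $b'(z)=k+1$. The second condition is vacuous, since there is no $j>k+1$. The first condition asks that $q_{\F'_j}(z)=0$ for all $j\le k$, which is the second identity in \eqref{eq:earlier-frequencies}. The stage $j=k+1$ itself is unrestricted, consistent with the fact that no bound on $X$-coordinates in $\G$ was needed.

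The main point requiring care is not a calculation but bookkeeping. One must confirm that the frequencies $q_{\F'_j}$ are taken on the new common ground set $Y\sqcup X$, and that this does not change them. It does not, because $q$ depends only on the family and the coordinate, and each $\F'_j$ is a nonempty family, as guaranteed by \Cref{lem:extend-chain}. One must also note that every coordinate of the new ground set lies in exactly one of $Y$ or $X$, and every $y\in Y$ is a copy of a unique $v$. This holds because $Y_j=V\times\{j\}\times[t]$, so $b'$ is well defined on all of $Y\sqcup X$.
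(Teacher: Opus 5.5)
Your proof is correct and follows the paper's argument essentially step for step. Inherited copies are handled at stages $j\le k$ via the first identity in \eqref{eq:earlier-frequencies} and at stage $k+1$ via \eqref{eq:extension}; new coordinates in $X$ use the second identity together with the vacuity of the later-stage condition. Your extra bookkeeping (frequencies do not depend on the ambient ground set, and $b'$ is well defined because each $y\in Y$ is a copy of a unique $v$) is accurate and only makes explicit what the paper leaves implicit.
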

\begin{proof}
For an inherited coordinate, all frequencies at stages $j\le k$ are
unchanged by \eqref{eq:earlier-frequencies}. Hence all the old requirements
remain valid. At the added stage $k+1$, the extension lemma gives frequency
at most $1/r$. Since its label is at most $k$, this is exactly a required
later-stage bound.

A new coordinate has label $k+1$ and is absent at every earlier stage by
\eqref{eq:earlier-frequencies}. There is no constraint beyond the trivial
upper bound $1$ at its own stage, and there are no later stages yet. Thus
\eqref{eq:valid-labels} also holds for all new coordinates.
\end{proof}

\begin{lemma}[Arbitrarily long chains with valid generations]\label{lem:iterated-chain}
For every integer $m\ge1$, there are nonempty empty-free union-closed
families $\F_1\subseteq\cdots\subseteq\F_m$ on a finite common ground set
and a valid generation labeling $b:V\to[m]$.
\end{lemma}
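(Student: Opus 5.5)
We argue by induction on $m$, building the chain one stage at a time with \Cref{lem:extend-chain} and checking the generation labels with \Cref{lem:labels}.

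\textbf{Base case $m=1$.} Take a one-point ground set $V=\{v_0\}$ and the single family $\F_1=\{\{v_0\}\}$. This family is nonempty, empty-free, and trivially union-closed. Set $b(v_0)=1$. Condition \eqref{eq:valid-labels} is then vacuous: there is no stage $j<1$ and no stage $j>1$. So the only frequency of $v_0$ is the unrestricted one at its own stage, namely $q_{\F_1}(v_0)=1$.

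\textbf{Inductive step.} Suppose that for some $k\ge1$ we have nonempty empty-free union-closed families $\F_1\subseteq\cdots\subseteq\F_k$ on a finite ground set $V$, together with a valid labeling $b:V\to[k]$.

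\begin{enumerate}[(i)]
\item \Cref{lem:extension} applies to $\F_k$ with the fixed $r\ge2$, because $\F_k$ is a nonempty empty-free union-closed family. It produces $E$, $Y$, $X$ and $\G$ on the finite ground set $Y\sqcup X$; finiteness is recorded in \eqref{eq:one-step-size}.
\item \Cref{lem:extend-chain} then shows that $\F'_1\subseteq\cdots\subseteq\F'_{k+1}$ is a nested chain of nonempty empty-free union-closed families on $Y\sqcup X$, where $\F'_j=E(\F_j)$ for $j\le k$ and $\F'_{k+1}=\G$.
\item \Cref{lem:labels} shows that the labeling $b'$ is valid for this chain. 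It has values in $[k+1]$: copies inherit $b$, and every $z\in X$ receives label $k+1$.
\end{enumerate}

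This completes the step from $k$ to $k+1$. After $m-1$ steps we obtain the required chain of length $m$.

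\textbf{Points to check.} The only thing to verify is that every coordinate of the new ground set receives a label. This holds because $Y\sqcup X$ consists exactly of the copies $(v,j,\ell)$ of old coordinates together with the auxiliary coordinates in $X$, and $b'$ is defined on both kinds.

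The other delicate point is the base case. The stated hypotheses, in particular nonemptiness of each family, must hold there so that \Cref{lem:extension} can be invoked at the first step; the one-set family above satisfies them. I do not expect any real obstacle, since all substantive work is contained in the earlier lemmas.
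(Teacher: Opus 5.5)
Your proof is correct and follows the paper's argument exactly: the same one-point base case $\F_1=\{\{v\}\}$ with $b(v)=1$, then induction using \Cref{lem:extension}, \Cref{lem:extend-chain} and \Cref{lem:labels}. Your extra check that every coordinate of $Y\sqcup X$ receives a label spells out a point the paper leaves implicit.
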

\begin{proof}
For $m=1$, take $V_1=\{v\}$, $\F_1=\{\{v\}\}$, and $b(v)=1$. The validity
conditions are trivially true. Given a chain of length $k$, extend it using
\Cref{lem:extend-chain}, and extend its labels using
\Cref{lem:labels}. This proves the assertion by induction.

For completeness, if $n_k$ is the ground-set size after stage $k$, the
particular construction above satisfies
\[
 n_1=1,\qquad n_{k+1}=r(2n_k+1)(n_k+r+3).
\]
Thus every stage is finite. The ambient ground set changes at each step,
but all earlier families are transported together. The final chain lives
on the single finite ground set obtained after the last extension.
\end{proof}

\begin{proof}(of \Cref{thm:example})
    Fix integers \(r\ge2\) and \(m\ge1\). By Lemma 6.3, there exist nonempty empty-free union-closed families

$$ \mathcal F_1\subseteq\cdots\subseteq\mathcal F_m $$

on a common finite ground set, with a valid generation labeling. Define \(w\) by (2.1). Lemma 2.1 gives admissibility and normalization, while Lemma 2.2 yields

$$ \delta(w)\le\frac1m+\frac{m-1}{mr}. $$
\end{proof}

\paragraph{Acknowledgment. }The author would like to thank Raghu Meka and Shachar Lovett for helpful discussions. The author initiated discussion with GPT 5.5 / 5.6 with the aim of proving the conjecture. Through the discussions that last for months, a lot of back-and-forth conversations were carried out. The goal was then shifted to disproving the conjecture. GPT 6 built upon idea of the author and completed the construction. Most of the writings here are done by the author. GPT 6 helped with polishing the writeup.

\bibliographystyle{alpha}
\bibliography{refs}

@article{EIL,
  author        = {Ellis, David and Ivan, Maria-Romina and Leader, Imre},
  title         = {Small sets in union-closed families},
  journal       = {The Electronic Journal of Combinatorics},
  volume        = {30},
  number        = {1},
  pages         = {P1.8},
  year          = {2023},
  doi           = {10.37236/11004},
  eprint        = {2201.11484},
  archivePrefix = {arXiv},
  url           = {https://doi.org/10.37236/11004}
}

@misc{Gowers1,
  author       = {Gowers, Timothy},
  title        = {{FUNC1}---strengthenings, variants, potential counterexamples},
  howpublished = {Gowers's Weblog},
  year         = {2016},
  month        = jan,
  url          = {https://gowers.wordpress.com/2016/01/29/func1-strengthenings-variants-potential-counterexamples/},
  note         = {Posted January 29, 2016. See ``Another weighted version''
                  and ``Duplicating elements''}
}

@misc{Gowers3,
  author       = {Gowers, Timothy and {contributors}},
  title        = {{FUNC3}---further strengthenings and variants},
  howpublished = {Gowers's Weblog},
  year         = {2016},
  month        = feb,
  url          = {https://gowers.wordpress.com/2016/02/13/func3-further-strengthenings-and-variants/},
  note         = {Posted February 13, 2016. See the comments of February 15--17
                  on the minimum-weight condition and $w(\varnothing)=0$}
}

@misc{KahnProblem,
  author       = {Kahn, Jeff},
  title        = {{642.583 Problem Set 5}},
  howpublished = {Course material, Rutgers University},
  url          = {https://sites.math.rutgers.edu/~jkahn/p5.pdf},
  note         = {Available at https://sites.math.rutgers.edu/~jkahn/p5.pdf}
}

@misc{KahnSolution,
  author       = {Kahn, Jeff},
  title        = {{583 PS5 Solutions}},
  howpublished = {Course material, Rutgers University},
  url          = {https://sites.math.rutgers.edu/~jkahn/s5.pdf},
  note         = {Available at https://sites.math.rutgers.edu/~jkahn/s5.pdf}
}

@misc{Gilmer2022,
  author        = {Gilmer, Justin},
  title         = {A constant lower bound for the union-closed sets conjecture},
  year          = {2022},
  eprint        = {2211.09055},
  archivePrefix = {arXiv},
  primaryClass  = {math.CO},
  doi           = {10.48550/arXiv.2211.09055},
  url           = {https://arxiv.org/abs/2211.09055}
}

@misc{Sawin2023,
  author        = {Sawin, Will},
  title         = {An improved lower bound for the union-closed set conjecture},
  year          = {2023},
  eprint        = {2211.11504},
  archivePrefix = {arXiv},
  primaryClass  = {math.CO},
  doi           = {10.48550/arXiv.2211.11504},
  url           = {https://arxiv.org/abs/2211.11504},
  note          = {Version 3, revised June 19, 2023}
}

@misc{AHS2022,
  author        = {Ryan Alweiss and Brice Huang and Mark Sellke},
  title         = {Improved Lower Bound for {Frankl}'s Union-Closed Sets Conjecture},
  year          = {2022},
  eprint        = {2211.11731},
  archivePrefix = {arXiv},
  primaryClass  = {math.CO},
  howpublished  = {arXiv:2211.11731},
  url           = {https://arxiv.org/abs/2211.11731}
}

@misc{CL2022,
  author        = {Zachary Chase and Shachar Lovett},
  title         = {Approximate union closed conjecture},
  year          = {2022},
  eprint        = {2211.11689},
  archivePrefix = {arXiv},
  primaryClass  = {math.CO},
  howpublished  = {arXiv:2211.11689},
  url           = {https://arxiv.org/abs/2211.11689}
}

@misc{Liu2023,
  author        = {Jingbo Liu},
  title         = {Improving the Lower Bound for the Union-closed Sets Conjecture via Conditionally {IID} Coupling},
  year          = {2023},
  eprint        = {2306.08824},
  archivePrefix = {arXiv},
  primaryClass  = {cs.IT},
  howpublished  = {arXiv:2306.08824},
  url           = {https://arxiv.org/abs/2306.08824}
}

@misc{Jiang2026,
  author        = {Yunjiang Jiang},
  title         = {Entropy bounds and global couplings for union-closed families},
  year          = {2026},
  eprint        = {2609.08291},
  archivePrefix = {arXiv},
  primaryClass  = {math.CO},
  howpublished  = {arXiv:2609.08291},
  note          = {Preprint, submitted 8 September 2026},
  url           = {https://arxiv.org/abs/2609.08291}
}

@misc{DasWu2024,
  author        = {Shagnik Das and Saintan Wu},
  title         = {Frequent elements in union-closed set families},
  year          = {2024},
  eprint        = {2412.03862},
  archivePrefix = {arXiv},
  primaryClass  = {math.CO},
  howpublished  = {arXiv:2412.03862},
  note          = {Version 3, revised 11 July 2025},
  url           = {https://arxiv.org/abs/2412.03862}
}

@misc{Ellis2020,
  author        = {David Ellis},
  title         = {Union-closed families with small average overlap densities},
  year          = {2020},
  eprint        = {2012.03235},
  archivePrefix = {arXiv},
  primaryClass  = {math.CO},
  howpublished  = {arXiv:2012.03235},
  url           = {https://arxiv.org/abs/2012.03235}
}

@incollection{frankl1995extremal,
  author    = {Frankl, Peter},
  title     = {Extremal Set Systems},
  booktitle = {Handbook of Combinatorics},
  volume    = {2},
  pages     = {1293--1329},
  year      = {1995},
  publisher = {Elsevier}
}

\end{document}